\documentclass[12pt]{amsart}
\usepackage{amsmath,amsthm,amsfonts,amssymb,amscd,euscript}

\newlength{\defbaselineskip}
\setlength{\defbaselineskip}{\baselineskip}


\setlength{\parindent}{.3 in} \setlength{\textwidth}{6.5 in}
\setlength{\topmargin} {-.2 in} \setlength{\evensidemargin}{0 in}
\setlength{\oddsidemargin}{0 in} \setlength{\footskip}{.3 in}
\setlength{\headheight}{.3 in} \setlength{\textheight}{8.7 in}
\setlength{\parskip}{.1 in}

\theoremstyle{plain}

\theoremstyle{definition}

\newcommand{\gchoose}[2]{\left[\begin{array}{c}{#1 } \\{#2} \end{array}\right]  }

\theoremstyle{plain}
\newtheorem{thm}{Theorem}

\newtheorem{lem}[thm]{Lemma}
\newtheorem{cor}[thm]{Corollary}

\newtheorem{prop}[thm]{Proposition}

\theoremstyle{definition}
\newtheorem{defn}[thm]{Definition}

\newtheorem{rem}[thm]{Remark}
\numberwithin{equation}{section}
\begin{document}





\title[Rank two acyclic cluster algebrass]{On cluster variables of rank two acyclic cluster algebras}
\author{Kyungyong Lee}
\thanks{Research partially supported by NSF grant DMS 0901367.}


\address{Department of Mathematics, University of Connecticut, Storrs, CT 06269}
\email{{\tt kyungl@purdue.edu}}
\begin{abstract}
In this note, we find an explicit formula for the Laurent expression of  cluster variables of coefficient-free rank two cluster algebras associated with the matrix $\left(\begin{array}{cc} 0 & c\\-c & 0   \end{array}\right)$, and show that a large number of coefficients are non-negative.  As a corollary, we obtain an explicit expression for the Euler-Poincar\'{e} characteristics of the corresponding quiver Grassmannians.
\end{abstract}

 \maketitle
 
 \textbf{2010 Mathematics Subject Classification :} 13F60, 16G20.

\textbf{Keywords :} Cluster algebras, quiver representations, generalized Kronecker quiver.


\section{introduction}

Let $b,c$ be positive integers and $x_1, x_2$ be indeterminates. The (coefficient-free) \emph{cluster algebra} $\mathcal{A}(b,c)$ is the subring of the field $\mathbb{Q}(x_1,x_2)$ generated by the elements $x_m$, $m\in \mathbb{Z}$ satisfying the recurrence relations:
$$
x_{n+1} =\left\{ \begin{array}{cl}
(x_n^b +1)/{x_{n-1}} & \text{ if } n \text{ is odd,} \\
\text{ } & \text{ }\\
{(x_n^c +1)}/{x_{n-1}} & \text{ if } n \text{ is even.}
\end{array}      \right.
$$ 
The elements $x_m$, $m\in \mathbb{Z}$ are called the cluster variables of $\mathcal{A}(b,c)$.  Fomin and Zelevinsky \cite{FZ} introduced cluster algebras and proved the Laurent phenomenon whose special case says that for every $m\in  \mathbb{Z}$ the cluster variable $x_m$ can be expressed as a Laurent polynomial of $x_1^{\pm 1}$ and  $x_2^{\pm 1}$. In addition, they conjectured that the coefficients of monomials in the Laurent expression of $x_m$ are non-negative integers.  When $bc\leq 4$, Sherman-Zelevinsky \cite{SZ} and independently Musiker-Propp \cite{MP} proved the conjecture. Moreover in this case the explicit combinatorial formulas for the coefficients are known.  In this paper, we find an explicit formula for the coefficients when $b=c\geq 2$, and show that a large number of coefficients  are non-negative.

As we will frequently use product forms, we say a few words about our convention. When we have any  integer $A$   and any function $f(i)$ of $i$, the product $\prod_{i=A}^{A-1} f(i)$ will be defined to be 1.

Before we state our main results, we need some definitions.

\begin{defn}\label{modifiedbinomialcoeff}
For arbitrary (possibly negative) integers $A, B$, we define the modified binomial coefficient as follows.
$$\left[\begin{array}{c}{A } \\{B} \end{array}\right] := \left\{ \begin{array}{ll}  \prod_{i=0}^{A-B-1} \frac{A-i}{A-B-i}, & \text{ if }A > B\\ \, & \,  \\   1, & \text{ if }A=B \\ \, & \, \\  0, & \text{ if }A<B.  \end{array}  \right.$$ \qed
\end{defn} 

If $A\geq 0$ then $\left[\begin{array}{c}{A } \\{B} \end{array}\right]=\gchoose{A}{A-B}$ is just the usual binomial coefficient. In general, $\gchoose{A}{A-B}$ is equal to the generalized binomial coefficient ${A \choose B}$. But in this paper we use our modified binomial coefficients to  avoid too complicated expressions.

\begin{defn}
Let $\{a_n\}$ be the sequence  defined by the recurrence relation $$a_n=ca_{n-1} -a_{n-2},$$ with the initial condition $a_1=0$, $a_2=1$. If $c=2$ then $a_n=n-1$. When $c>2$, it is easy to see that 
$$
a_n= \frac{1}{\sqrt{c^2-4}  }\left(\frac{c+\sqrt{c^2-4}}{2}\right)^{n-1} - \frac{1}{\sqrt{c^2-4}  }\left(\frac{c-\sqrt{c^2-4}}{2}\right)^{n-1} = \sum_{i\geq 0} (-1)^i { {n-2-i} \choose i }c^{n-2-2i}.
$$ \qed
\end{defn}

\begin{rem}
It is easy to show that for any $n$,
\begin{equation}\label{negone}
a_{n-1}a_{n-3}-a_{n-2}^2=-1,
\end{equation}
which we will use later.
\end{rem}

Our main result is the following.

\begin{thm}\label{mainthm} Assume that $b=c\geq 2$. Let $n\geq 3$. Then
\begin{equation}\label{mainformula}\aligned
&x_n= x_1^{-a_{n-1}} x_2^{-a_{n-2}} \sum_{e_1,e_2} \sum_{ t_0,t_1,\cdots,t_{n-4}} \left[ \left( \prod_{i=0}^{n-4} \left[\begin{array}{c}{{a_{i+1} - cs_i    } } \\{t_i} \end{array}\right]   \right) \right.\\
&\,\,\,\,\,\,\,\,\,\,\,\,\,\times \left.\gchoose{a_{n-2} - cs_{n-3} }{a_{n-2} - cs_{n-3}-e_2+s_{n-4}}\gchoose{-a_{n-3} + c e_2 }{  -a_{n-3} + c e_2-e_1+s_{n-3} } x_1^ {c(a_{n-2}-e_{2})} x_2^{ce_{1}}\right],
\endaligned\end{equation}
where $$
s_i=\sum_{j=0}^{i-1} a_{i-j+1}t_j,
$$
and the summations run over all integers $e_1,e_2, t_0,...,t_{n-4}$ satisfying 
\begin{equation}\label{cond501}\left\{
\begin{array}{l} 0\leq t_i \leq a_{i+1} - cs_i \, (0\leq i\leq n-4),\\
 0\leq a_{n-2} - cs_{n-3}-e_2+s_{n-4}\leq a_{n-2} - cs_{n-3}, \text{ and }  \\
e_2 a_{n-1} -e_1 a_{n-2}\geq 0.      
 \end{array} \right.\end{equation}
\end{thm}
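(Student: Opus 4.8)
The plan is to argue by induction on $n$, using the defining recurrence in a form adapted to the numerators. Write $x_n = x_1^{-a_{n-1}} x_2^{-a_{n-2}} P_n$, where $P_n$ denotes the numerator (a polynomial in $x_1,x_2$), and let the claim be that $P_n$ equals the double sum in (\ref{mainformula}). Since $b=c$, the recurrence reads $x_{n+1}x_{n-1}=x_n^c+1$, and substituting this normalization together with the linear recurrence $a_n=ca_{n-1}-a_{n-2}$ — which is exactly what makes the denominators match, since it gives $a_n+a_{n-2}=ca_{n-1}$ and $a_{n-1}+a_{n-3}=ca_{n-2}$ — turns it into the polynomial identity
\[
P_{n+1}P_{n-1}=P_n^{c}+x_1^{c a_{n-1}}x_2^{c a_{n-2}}.
\]
Thus it suffices to show that the explicit polynomials defined by the right-hand side of (\ref{mainformula}) satisfy this identity together with the correct initial data.

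First I would settle the base cases directly. For $n=3$ all the $t_i$ disappear (the product $\prod_{i=0}^{-1}$ is empty and $s_0=s_{-1}=0$), the constraints (\ref{cond501}) force $e_2=0$ and $0\le e_1\le 1$, and the formula collapses to $x_1^{-1}(1+x_2^c)$, which is exactly $x_3$. A parallel but slightly longer computation, in which the single binomial expansion of $(1+x_2^c)^c$ appears, verifies $n=4$. With these two consecutive cases in hand the induction can run: for $n\ge 4$ the displayed identity produces $P_{n+1}$ from $P_n$ and $P_{n-1}$, both of which already lie in the range covered by the formula.

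For the inductive step I would assume (\ref{mainformula}) for $x_{n-1}$ and $x_n$ and verify the polynomial identity above. The substantive work is the expansion of $P_n^{c}$: raising the nested sum for $P_n$ to the $c$-th power by the multinomial theorem produces one new layer of summation, and the claim is that, after regrouping, this layer is precisely the index $t_{n-3}$ together with the modified binomial coefficient $\gchoose{a_{n-2}-cs_{n-3}}{t_{n-3}}$ that (\ref{mainformula}) adjoins when $n$ is replaced by $n+1$; simultaneously the two boundary coefficients carrying $e_1,e_2$ are updated, and this is where the division by $P_{n-1}$ is absorbed. Matching the coefficient of each monomial $x_1^px_2^q$ then reduces the identity to a finite family of summation identities among the modified binomial coefficients — convolutions of Vandermonde--Chu type — in which the exponents are governed by the recurrence for $a_n$ and, crucially, by the quadratic relation $a_{n-1}a_{n-3}-a_{n-2}^2=-1$ of (\ref{negone}); it is this relation that forces the large cancellation collapsing $P_{n+1}P_{n-1}-P_n^{c}$ to the single monomial $x_1^{c a_{n-1}}x_2^{c a_{n-2}}$.

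The main obstacle is precisely this inductive combinatorial identity, and two features make it delicate. First, the modified binomial coefficients take negative top arguments, so they are genuine generalized binomial coefficients rather than counting numbers, and the familiar positive Vandermonde convolution must be replaced throughout by its signed form. Second, the summation range (\ref{cond501}) is not a product of intervals: the conditions $0\le a_{n-2}-cs_{n-3}-e_2+s_{n-4}\le a_{n-2}-cs_{n-3}$ and $e_2a_{n-1}-e_1a_{n-2}\ge 0$ couple the indices. I therefore expect the crux to be the bookkeeping — showing that the index set (\ref{cond501}) for $x_{n+1}$ is exactly the image, under the multinomial expansion followed by division by $P_{n-1}$, of the index set for $x_n$, and in particular that the non-box inequality $e_2a_{n-1}-e_1a_{n-2}\ge 0$ is preserved under this passage. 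Once the ranges are correctly matched, the coefficient identities themselves should follow from (\ref{negone}) and repeated signed Vandermonde summation.
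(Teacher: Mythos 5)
Your reduction to the polynomial identity $P_{n+1}P_{n-1}=P_n^{c}+x_1^{ca_{n-1}}x_2^{ca_{n-2}}$ is correct as an identity (it is exactly the recurrence rewritten using $a_n+a_{n-2}=ca_{n-1}$), but the plan for verifying it on the explicit sums has a genuine gap at its center: you never explain how the ``division by $P_{n-1}$ is absorbed.'' Expanding $P_n^{c}$ by the multinomial theorem produces a $c$-fold convolution of the entire nested sum --- not a single new index with a single coefficient $\gchoose{a_{n-2}-cs_{n-3}}{t_{n-3}}$ --- and dividing the result by the polynomial $P_{n-1}$ is not a termwise regrouping of sums; no mechanism is offered for it, and it is precisely the step where the whole difficulty lives. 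The paper avoids this entirely. It proves the reindexed form (Theorem~\ref{mainthm2}) by induction using the shift symmetry of the recurrence (since $b=c$): the formula for $x_{n+1}$ in terms of $x_2,x_3$ is literally the formula for $x_n$ in terms of $x_1,x_2$, and one then substitutes $x_3=(x_2^c+1)/x_1$. The only expansion required per step is a \emph{single} generalized binomial expansion $(x_2^c+1)^{a_n-cs_{n-1}}=\sum_{t_{n-1}}\gchoose{a_n-cs_{n-1}}{t_{n-1}}(x_2^c)^{a_n-cs_{n-1}-t_{n-1}}$ with a possibly negative exponent, which is exactly what adjoins the new index and the new modified binomial coefficient. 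There is no $c$-th power to expand and no division at all.

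The second place where your sketch underestimates the work is the bookkeeping of the index set (\ref{cond501}), which you correctly flag as delicate but attribute to the wrong identity. In the paper, after the substitution one must show two things: (i) the terms with $t_{n-2}>a_{n-1}-cs_{n-2}$ contribute zero, which is an elementary but nontrivial chain of inequalities using $a_{n-1}a_{n-3}-a_{n-2}^2=-1$ (your relation (\ref{negone}) does enter, but here rather than in a Vandermonde cancellation); and (ii) restricting to the half-space $e_2a_{n-1}-e_1a_{n-2}\geq 0$ loses nothing, i.e.\ the sum over the complementary region vanishes. Point (ii) is Proposition~\ref{eanegineqcor}, and it is the real combinatorial core of the paper: it is proved by an $(n-3)$-step change of variables (Lemma~\ref{importantprop}) each step of which invokes a weighted Vandermonde identity for modified binomial coefficients with polynomial weights (Lemma~\ref{genvander}), ultimately collapsing the sum onto a coefficient $\gchoose{0}{\,\cdot\,}$ that vanishes. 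Your ``signed Vandermonde convolutions'' gesture in the right direction, but without the shift-and-substitute framework and without an argument replacing Proposition~\ref{eanegineqcor}, the proposal does not yet constitute a proof.
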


Since $\gchoose{A}{B}\neq 0$ if and only if $A\geq B$, we may add the condition $0\geq -e_1+s_{n-3}$ to (\ref{cond501}). Then the summation in the statement is guaranteed to be a finite sum. A referee remarks that $F$-polynomials have similar expressions.  As he pointed out, the expression without (\ref{cond501}) is an easy consequence of the  formula (6.28) in the paper \cite{FZ4} by Fomin and Zelevinsky, and the one with $e_2 a_{n-1} -e_1 a_{n-2}\geq 0$  is a consequence of \cite[Proposition 3.5]{SZ} in the paper by Sherman and Zelevinsky. Our contribution is to show that  all the modified binomial coefficients in (\ref{mainformula}) except for the last one are non-negative. 

As a corollary to Theorem~\ref{mainthm}, we obtain  an expression for the Euler-Poincar\'{e} characteristic of  the variety $\text{Gr}_{(e_1,e_2)}(M(n))$ of all subrepresentations of dimension $(e_1,e_2)$ in a unique (up to an isomorphism) indecomposable $Q_c$-representation $M(n)$ of dimension $(a_{n-1}, a_{n-2})$, where  $Q_c$ is the generalized Kronecker  quiver with  two vertices 1 and 2, and $c$ arrows from 1 to 2. We use a result of Caldero and  Zelevinsky \cite[Theorem 3.2 and (3.5)]{CZ}.

\begin{thm}[Caldero and  Zelevinsky]
The cluster variable $x_n$ is equal to 
$$
x_1^{-a_{n-1}} x_2^{-a_{n-2}}\sum_{e_1,e_2} \chi(\emph{Gr}_{(e_1,e_2)}(M(n)))x_1^ {c(a_{n-2}-e_{2})} x_2^{ce_{1}}.
$$
\end{thm}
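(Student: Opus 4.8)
The plan is to deduce this formula directly from the general Caldero--Chapoton cluster character, in the form established by Caldero and Zelevinsky in \cite{CZ}. The starting point is the correspondence, valid for any acyclic rank-two cluster algebra, between the non-initial cluster variables and the indecomposable rigid representations of the associated quiver. Specializing to the generalized Kronecker quiver $Q_c$, the cluster variables $x_n$ with $n\geq 3$ correspond to the indecomposable preprojective (and preinjective) representations of $Q_c$; write $M(n)$ for the representation attached to $x_n$. The first step is therefore to identify $M(n)$ precisely and to record its dimension vector.

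For the second step I would compute $\dim M(n)$ using Auslander--Reiten theory. The Coxeter transformation of $Q_c$ acts on dimension vectors by the linear recursion whose eigenvalues are $\tfrac{c\pm\sqrt{c^2-4}}{2}$; iterating it along the preprojective component produces exactly the integers $a_n$ satisfying $a_n = c a_{n-1} - a_{n-2}$ with $a_1=0$, $a_2=1$. A short induction then gives $\dim M(n) = (a_{n-1}, a_{n-2})$, which matches the denominator $x_1^{-a_{n-1}} x_2^{-a_{n-2}}$ in the statement. This is where the sequence $\{a_n\}$ enters and why the denominator vector has the stated shape.

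The third step is to pin down the exponents. In the Caldero--Zelevinsky formula the monomial attached to a subrepresentation of dimension $\mathbf{e}=(e_1,e_2)$ of a module $M$ of dimension $\mathbf{d}$ is $\prod_i x_i^{-\langle \mathbf{e}, S_i\rangle - \langle S_i, \mathbf{d}-\mathbf{e}\rangle}$, where $\langle -,-\rangle$ is the Euler form of $Q_c$, namely $\langle \mathbf{a},\mathbf{b}\rangle = a_1 b_1 + a_2 b_2 - c\, a_1 b_2$, and $S_1, S_2$ are the simples. Substituting $\mathbf{d}=(a_{n-1},a_{n-2})$ and reading off equation (3.5) of \cite{CZ}, the exponent of $x_1$ is $-a_{n-1} + c(a_{n-2}-e_2)$ and that of $x_2$ is $-a_{n-2} + c e_1$; pulling out the denominator leaves precisely the factor $x_1^{c(a_{n-2}-e_2)} x_2^{c e_1}$. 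The coefficient $c$ in both exponents is the number of arrows of $Q_c$. Combining the three steps, and summing over all subrepresentation dimensions $(e_1,e_2)$, yields the claimed expression.

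I expect the main obstacle to be the bookkeeping in the second and third steps: verifying that the Auslander--Reiten translate indexing is consistent with the initial conditions $a_1=0$, $a_2=1$, so that the dimension vectors line up with the correct $n$, and checking that the Euler-form computation reproduces the exponents with the right signs and with $e_1$ and $e_2$ playing their correct roles. Once the correspondence $x_n\leftrightarrow M(n)$ and the dimension vector $(a_{n-1},a_{n-2})$ are fixed, the remaining identity is a direct instance of the cluster character theorem of \cite{CZ}, requiring no geometric input beyond the definition of the quiver Grassmannian $\text{Gr}_{(e_1,e_2)}(M(n))$.
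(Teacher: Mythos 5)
Your proposal is correct: the paper itself offers no proof of this statement, quoting it directly as \cite[Theorem 3.2 and (3.5)]{CZ}, and your sketch is a faithful reconstruction of the Caldero--Zelevinsky argument, with the Euler form $\langle \mathbf{a},\mathbf{b}\rangle = a_1b_1+a_2b_2-ca_1b_2$ and the exponent computation $-\langle \mathbf{e},S_i\rangle-\langle S_i,\mathbf{d}-\mathbf{e}\rangle$ correctly reproducing the monomial $x_1^{c(a_{n-2}-e_2)}x_2^{ce_1}$ after the denominator $x_1^{-a_{n-1}}x_2^{-a_{n-2}}$ is extracted. Since this matches the paper's (implicit) route exactly, nothing further is needed.
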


\begin{cor}\label{maincor}
Assume that $b=c\geq 2$. For any $(e_1,e_2)$ and $n\geq 3$, the Euler-Poincar\'{e} characteristic of $\emph{Gr}_{(e_1,e_2)}(M(n))$ is equal to
\begin{equation}\label{submainformula}\sum_{ t_0,t_1,\cdots,t_{n-4}} \left[ \left( \prod_{i=0}^{n-4} \left[\begin{array}{c}{{a_{i+1} - cs_i    } } \\{t_i} \end{array}\right]   \right)\gchoose{a_{n-2} - cs_{n-3} }{a_{n-2} - cs_{n-3}-e_2+s_{n-4}}\gchoose{-a_{n-3} + c e_2 }{  -a_{n-3} + c e_2-e_1+s_{n-3} } \right],   
\end{equation}
where  the summation runs over all integers $t_0,...,t_{n-4}$ satisfying 
\begin{equation}\label{cond512}\left\{
\begin{array}{l} 0\leq t_i \leq a_{i+1} - cs_i \, (0\leq i\leq n-4), \text{ and } \\
  0\leq a_{n-2} - cs_{n-3}-e_2+s_{n-4}\leq a_{n-2} - cs_{n-3}.      
 \end{array} \right.\end{equation}\end{cor}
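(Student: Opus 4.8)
The plan is to obtain the Euler--Poincar\'{e} characteristics directly from the coefficients of $x_n$, by comparing two expansions of one and the same Laurent polynomial: the formula of Theorem~\ref{mainthm} and the Caldero--Zelevinsky expansion recalled just above. Both write $x_1^{a_{n-1}}x_2^{a_{n-2}}x_n$ as a $\ZZ$-linear combination of the monomials $x_1^{c(a_{n-2}-e_2)}x_2^{ce_1}$, indexed by pairs $(e_1,e_2)$. The first thing I would record is that, because $c\ge 2$, the map $(e_1,e_2)\mapsto \bigl(c(a_{n-2}-e_2),\,ce_1\bigr)$ is injective, so these monomials are pairwise distinct; hence each coefficient in the Laurent expansion of $x_n$ is attached to a single pair $(e_1,e_2)$, and the two expansions may be compared monomial by monomial.

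For a pair $(e_1,e_2)$ satisfying the third constraint of (\ref{cond501}), namely $e_2a_{n-1}-e_1a_{n-2}\ge 0$, the comparison is immediate. That constraint involves only $(e_1,e_2)$ and places no restriction on the inner summation variables $t_0,\dots,t_{n-4}$; discarding it turns (\ref{cond501}) into exactly (\ref{cond512}). Thus the coefficient of $x_1^{c(a_{n-2}-e_2)}x_2^{ce_1}$ contributed by Theorem~\ref{mainthm} is precisely the inner $t$-sum (\ref{submainformula}). On the other side, the Caldero--Zelevinsky expansion attaches to the same monomial the coefficient $\chi(\textnormal{Gr}_{(e_1,e_2)}(M(n)))$. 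Matching the two yields the asserted identity for all such $(e_1,e_2)$.

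The only point requiring care --- and the one I expect to be the crux --- is the pairs $(e_1,e_2)$ with $e_2a_{n-1}-e_1a_{n-2}<0$, which Theorem~\ref{mainthm} omits. For these the corresponding monomial lies outside the support of $x_n$, so $\chi(\textnormal{Gr}_{(e_1,e_2)}(M(n)))=0$, and one must verify that (\ref{submainformula}) likewise vanishes. Rather than attack this combinatorial identity by hand, I would invoke the unrestricted form of the expansion: as noted in the remark after Theorem~\ref{mainthm}, the formula (\ref{mainformula}) with the constraint $e_2a_{n-1}-e_1a_{n-2}\ge 0$ deleted --- that is, summed over all $(e_1,e_2)$ and over $t_0,\dots,t_{n-4}$ subject only to (\ref{cond512}) --- is again an expansion of $x_n$, coming from formula~(6.28) of \cite{FZ4}; here finiteness and the effect of dropping $(a)$ and $(b)$ are controlled by the vanishing of the modified binomial coefficients outside the ranges (\ref{cond512}). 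Using this unrestricted form in place of Theorem~\ref{mainthm}, the coefficient of $x_1^{c(a_{n-2}-e_2)}x_2^{ce_1}$ equals (\ref{submainformula}) for \emph{every} pair $(e_1,e_2)$; comparing with Caldero--Zelevinsky as in the previous paragraph then gives the corollary uniformly, and in particular forces (\ref{submainformula}) to vanish exactly on the pairs omitted by (\ref{cond501}).
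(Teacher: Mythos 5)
Your treatment of the pairs with $e_2a_{n-1}-e_1a_{n-2}\ge 0$ coincides with the paper's: there the corollary really is an immediate monomial-by-monomial comparison of Theorem~\ref{mainthm} with the Caldero--Zelevinsky expansion, and dropping the third constraint of (\ref{cond501}) does turn it into (\ref{cond512}). Likewise, for the omitted pairs your deduction that $\chi(\text{Gr}_{(e_1,e_2)}(M(n)))=0$ is fine. The gap is in your last step, where you must show that $(\ref{submainformula})$ itself vanishes for those pairs. You claim that the unrestricted expansion coming from formula (6.28) of \cite{FZ4}, once grouped by monomial, has inner sums ``subject only to (\ref{cond512})'' because the discarded constraints are ``controlled by the vanishing of the modified binomial coefficients.'' That is not true: by Definition~\ref{modifiedbinomialcoeff}, $\gchoose{A}{B}$ is also nonzero whenever $B\le A<0$, so as soon as some $a_{i+1}-cs_i$ goes negative (already $a_4-cs_3=-1$ for $t_0=t_1=0$, $t_2=c$), the unrestricted sum picks up nonzero terms with $t_i<0$ that condition (a) of (\ref{cond512}) excludes. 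Consequently the coefficient delivered by the unrestricted expansion is not visibly the restricted sum $(\ref{submainformula})$, and comparing it with the Caldero--Zelevinsky coefficient tells you nothing about $(\ref{submainformula})$.

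Proving that those extra terms contribute zero is exactly the hard technical content of the paper: it is the vanishing statement (\ref{zeroeq701}) in the proof of Theorem~\ref{mainthm2}, and, for the omitted pairs, Proposition~\ref{eanegineqcor}, whose self-contained proof occupies Lemma~\ref{genvander} (a generalized Vandermonde identity valid for the modified binomial coefficients) and Lemma~\ref{importantprop} (a chain of changes of variables reducing $f(n-4)$ to $f(-1)$). The paper's own proof of the corollary disposes of the pairs with $e_2a_{n-1}-e_1a_{n-2}<0$ by citing precisely Proposition~\ref{eanegineqcor} for $(\ref{submainformula})=0$, together with the geometric fact that the quiver Grassmannian is then empty. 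To repair your argument you would need either to reprove that proposition or to invoke \cite[Proposition 3.5]{SZ}, which is the reference the remark after Theorem~\ref{mainthm} gives for the \emph{restricted} identity; the appeal to \cite{FZ4} alone does not suffice.
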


\begin{cor}\label{maincor2}
Assume that $b=c\geq 3$. Let $n\geq 3$. For any $(e_1,e_2)$ with $e_2\geq \frac{a_{n-3}}{c}$, the Euler-Poincar\'{e} characteristic of $\emph{Gr}_{(e_1,e_2)}(M(n))$ is non-negative.
\end{cor}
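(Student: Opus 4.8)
The plan is to read off Corollary~\ref{maincor2} directly from the closed formula (\ref{submainformula}) provided by Corollary~\ref{maincor}: under the hypothesis $e_2\ge a_{n-3}/c$ I will show that every individual summand of (\ref{submainformula}) is non-negative, and a sum of non-negative terms is non-negative. Thus the whole argument reduces to a sign analysis of the three types of factor appearing in a fixed summand indexed by a tuple $(t_0,\dots,t_{n-4})$ that satisfies the constraints (\ref{cond512}).

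First I would dispose of the factors $\prod_{i=0}^{n-4}\gchoose{a_{i+1}-cs_i}{t_i}$ and $\gchoose{a_{n-2}-cs_{n-3}}{a_{n-2}-cs_{n-3}-e_2+s_{n-4}}$. These are precisely the modified binomial coefficients that Theorem~\ref{mainthm} asserts to be non-negative. Equivalently, and more self-contained, the constraints (\ref{cond512}) force for each of them the lower index to lie between $0$ and the upper index (so in particular the upper index is $\ge 0$); by the observation following Definition~\ref{modifiedbinomialcoeff} each is then an ordinary binomial coefficient, hence $\ge 0$. Either way the product of all but the last factor of a summand is non-negative.

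The crux is the remaining factor $\gchoose{-a_{n-3}+ce_2}{-a_{n-3}+ce_2-e_1+s_{n-3}}$, the one coefficient in (\ref{mainformula}) that is not claimed non-negative in general. Here I would use the hypothesis in the equivalent form $ce_2\ge a_{n-3}$, i.e.\ $-a_{n-3}+ce_2\ge 0$. Since the upper index is then non-negative, the observation following Definition~\ref{modifiedbinomialcoeff} applies once more: $\gchoose{A}{B}$ with $A\ge 0$ is a genuine binomial coefficient---equal to $0$ if $A<B$, to $1$ if $A=B$, and to $\binom{A}{A-B}\ge 0$ if $A>B$---hence non-negative. This is the single place where $e_2\ge a_{n-3}/c$ enters, and it is exactly what distinguishes this corollary from the general (possibly sign-indefinite) case.

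Combining the two steps, each summand of (\ref{submainformula}) is a product of non-negative quantities, so the sum---which by Corollary~\ref{maincor} equals $\chi(\text{Gr}_{(e_1,e_2)}(M(n)))$---is non-negative. I expect no genuine obstacle at this stage: all the real work has already been absorbed into Theorem~\ref{mainthm} and Corollary~\ref{maincor}, so that the present statement is pure sign bookkeeping whose only active ingredient is the equivalence between the hypothesis $e_2\ge a_{n-3}/c$ and the non-negativity of the top entry $-a_{n-3}+ce_2$ of the final binomial coefficient.
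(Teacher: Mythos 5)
Your proof is correct and follows exactly the paper's own argument: the constraints (\ref{cond512}) make every modified binomial coefficient except the last one an ordinary (hence non-negative) binomial coefficient, and the hypothesis $e_2\geq a_{n-3}/c$ makes the upper entry $-a_{n-3}+ce_2$ of the last one non-negative so that it too is non-negative, whence each summand and therefore the sum is non-negative. No differences worth noting.
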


\noindent\emph{Acknowledgement.} We are grateful to Gr\'{e}goire Dupont for valuable discussions and correspondence. We also thank anonymous referees for their useful suggestions and helpful comments.

\section{Proofs}

We actually prove the following statement, which is equivalent to Theorem~\ref{mainthm} but simpler to prove.
\begin{thm}\label{mainthm2} Assume that $b=c\geq 2$. Let $n\geq 3$. Then
\begin{equation}\label{mainformula2}\aligned
&x_n= x_1^{-a_{n-1}} x_2^{-a_{n-2}} \sum_{ t_0,t_1,\cdots,t_{n-2}} \left[ \left( \prod_{i=0}^{n-2} \left[\begin{array}{c}{{a_{i+1} - cs_i    } } \\{t_i} \end{array}\right]   \right)x_1^ {cs_{n-2}} x_2^{c(a_{n-1}-s_{n-1})}\right],
\endaligned\end{equation}
where $$
s_i=\sum_{j=0}^{i-1} a_{i-j+1}t_j,
$$
and the summation runs over all integers $ t_0,...,t_{n-2}$ satisfying 
\begin{equation}\label{cond502}\left\{
\begin{array}{l} 0\leq t_i \leq a_{i+1} - cs_i \, (0\leq i\leq n-3), \text{ and } \\
s_{n-1} a_{n-2} -s_{n-2} a_{n-1}  \geq 0.      
 \end{array} \right.\end{equation}
\end{thm}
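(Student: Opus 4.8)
The plan is to prove Theorem~\ref{mainthm2} by induction on $n$, using the recurrence $x_{n+1}x_{n-1} = x_n^c + 1$ as the engine. First I would verify the base cases $n=3$ (and perhaps $n=4$) by direct computation: for $n=3$ the formula should reduce to $x_3 = x_1^{-a_2}x_2^{-a_1}(\text{something}) = (x_2^c+1)/x_1$, since $a_1=0$, $a_2=1$, $a_3=c$, and the product over $0\le i\le n-2=1$ together with the single constraint should collapse to exactly the $c+1$ monomials of $(x_2^c+1)/x_1$. Checking that the degree bookkeeping $x_1^{-a_{n-1}}x_2^{-a_{n-2}}\cdot x_1^{cs_{n-2}}x_2^{c(a_{n-1}-s_{n-1})}$ matches is essential to anchor the induction.

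For the inductive step I would assume the formula for $x_n$ and compute $x_{n+1} = (x_n^c+1)/x_{n-1}$, i.e.\ raise the Laurent-polynomial expression for $x_n$ to the $c$-th power, add $1$, and divide by the expression for $x_{n-1}$. The $c$-th power introduces a sum over $c$-tuples of index sets, and the key combinatorial identity needed is a Vandermonde-type convolution for the modified binomial coefficients: raising $\prod_i \gchoose{a_{i+1}-cs_i}{t_i}$ to the power $c$ and re-summing must produce a single new factor $\gchoose{a_{(n+1)-2}-cs'_{\cdot}}{t'}$ with the index shifted by one, where the new summation variables $t'_j$ are built from the old ones. The recurrence $a_n = ca_{n-1}-a_{n-2}$ and the definition $s_i = \sum_{j=0}^{i-1} a_{i-j+1}t_j$ are precisely what make the exponents telescope so that $cs_{n-2}$ and $c(a_{n-1}-s_{n-1})$ transform correctly into the $n+1$ versions. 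The addition of $1$ and the division by $x_{n-1}$ must then be absorbed by a shift/reindexing of the constraint $s_{n-1}a_{n-2}-s_{n-2}a_{n-1}\ge 0$; I would check that the ``$+1$'' term corresponds to the boundary index tuple (all $t_i$ at an extreme value) and that dividing by $x_{n-1}$ realigns the leading exponents, using the Remark's identity $a_{n-1}a_{n-3}-a_{n-2}^2 = -1$ (equation~(\ref{negone})) to control the cross-terms in the inequality constraint.

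The hard part will be the combinatorial identity hidden inside the $c$-th power: one must show that the convolution of $c$ copies of the modified binomial coefficients, after substituting the arguments dictated by the shifted $s_i$'s, collapses to exactly one modified binomial coefficient of the right shape, with the correct new upper summation limit $a_{i+1}-cs_i$ at the shifted index. Because the modified binomial coefficients are defined piecewise and can vanish (when $A<B$) or take non-standard values (when $A<0$), I expect the main obstacle to be verifying that these degenerate ranges are handled consistently---that the terms which would correspond to ``illegal'' binomial arguments either cancel or are forced to zero by the constraints in (\ref{cond502}). Rather than grind through the full multinomial expansion, I would isolate this as a standalone lemma: a $c$-fold Vandermonde convolution for $\gchoose{\cdot}{\cdot}$ adapted to the arithmetic of the sequence $\{a_n\}$, prove that lemma first, and then feed it into the inductive step so that the exponent and constraint transformations reduce to the deterministic identities $a_n=ca_{n-1}-a_{n-2}$ and (\ref{negone}).
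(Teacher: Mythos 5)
Your overall plan---induction on $n$ with base cases $n=3,4$, driven by the exchange relation---matches the paper's strategy in outline, but the engine you propose for the inductive step is not the one the paper uses, and as you describe it the step would not go through. You propose to compute $x_{n+1}=(x_n^c+1)/x_{n-1}$ by raising the closed-form Laurent expression for $x_n$ to the $c$-th power and then dividing by the expression for $x_{n-1}$. Two things break here. First, the ``$c$-fold Vandermonde convolution'' that you correctly identify as the crux is left entirely unproved, and there is no reason to expect the $c$-th power of the multi-indexed sum to collapse to a single new modified binomial factor. Second, and more seriously, $x_{n-1}$ is itself a Laurent polynomial with many terms, not a monomial; ``dividing by $x_{n-1}$'' is not a realignment of leading exponents but is essentially the Laurent phenomenon itself, and performing it explicitly on the closed form is intractable. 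Likewise the ``$+1$'' is not a boundary index tuple.

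The paper sidesteps both problems with a shift trick you are missing: since $b=c$, the recurrence is invariant under $n\mapsto n+1$, so the induction hypothesis applied to the seed $(x_2,x_3)$ gives $x_{n+1}$ in the \emph{same} closed form with $x_1,x_2$ replaced by $x_2,x_3$. One then substitutes the single relation $x_3=(x_2^c+1)/x_1$ and expands $\bigl(\tfrac{x_2^c+1}{x_1}\bigr)^{a_n-cs_{n-1}}$ by the generalized binomial theorem (the exponent may be negative, which is exactly why the modified binomial coefficients $\gchoose{A}{B}$ are set up as they are); this is what introduces the new summation variable $t_{n-1}$, with no $c$-th power convolution anywhere. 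The remaining---and genuinely delicate---work is the constraint bookkeeping: one must show that the terms with $t_{n-2}>a_{n-1}-cs_{n-2}$ contribute zero (via an inequality argument using (\ref{negone}) to force $a_n-cs_{n-1}<0$ and hence a vanishing binomial coefficient), that the old constraint $s_{n-1}a_{n-2}-s_{n-2}a_{n-1}\ge 0$ becomes automatic, and that the new constraint $s_na_{n-1}-s_{n-1}a_n\ge 0$ can be imposed for free; this last point is Proposition~\ref{eanegineqcor}, whose proof is where a Vandermonde-type identity for the modified binomial coefficients (Lemma~\ref{genvander}) actually enters, via a chain of changes of variables. Your proposal does not address any of this constraint analysis.
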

\begin{lem}
Theorem~\ref{mainthm2} is equivalent to Theorem~\ref{mainthm}.
\end{lem}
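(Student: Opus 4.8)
The plan is purely combinatorial: since both theorems assert a formula for the \emph{same} Laurent polynomial $x_n$, it suffices to prove that the right-hand side of (\ref{mainformula2}) equals the right-hand side of (\ref{mainformula}); establishing this identity proves that either theorem implies the other. I would do this by exhibiting an explicit invertible change of summation variables matching the two sums term by term. Concretely, keeping $t_0,\dots,t_{n-4}$ unchanged, I introduce
\[
e_2 = a_{n-2}-s_{n-2}, \qquad e_1 = a_{n-1}-s_{n-1},
\]
and propose to show that, for each fixed choice of $t_0,\dots,t_{n-4}$, this sets up a bijection between the pairs $(t_{n-3},t_{n-2})\in\ZZ^2$ summed over in (\ref{mainformula2}) and the pairs $(e_2,e_1)\in\ZZ^2$ summed over in (\ref{mainformula}).

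The key computation is the single recurrence
\[
s_i = c\,s_{i-1}-s_{i-2}+t_{i-1},
\]
which follows directly from $s_i=\sum_{j=0}^{i-1}a_{i-j+1}t_j$ together with $a_k=ca_{k-1}-a_{k-2}$ and the initial values $a_1=0$, $a_2=1$: the term $j=i-1$ contributes $a_2t_{i-1}=t_{i-1}$, while the vanishing $a_1=0$ collapses the remaining two sums into $cs_{i-1}-s_{i-2}$. Applying this at $i=n-2$ and $i=n-1$ gives $s_{n-2}=cs_{n-3}-s_{n-4}+t_{n-3}$ and $s_{n-1}=cs_{n-2}-s_{n-3}+t_{n-2}$. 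Solving for the two trailing indices, and using $a_{n-1}=ca_{n-2}-a_{n-3}$, I obtain
\[
t_{n-3}=a_{n-2}-cs_{n-3}+s_{n-4}-e_2, \qquad t_{n-2}=-a_{n-3}+ce_2-e_1+s_{n-3}.
\]
Since $s_{n-3}$ and $s_{n-4}$ depend only on $t_0,\dots,t_{n-4}$, this system is triangular with unit coefficients, hence a bijection of integer pairs in both directions.

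With these substitutions the term-by-term identification is immediate. The factors $\prod_{i=0}^{n-4}\gchoose{a_{i+1}-cs_i}{t_i}$ are literally common to both expressions. The $i=n-3$ factor $\gchoose{a_{n-2}-cs_{n-3}}{t_{n-3}}$ of (\ref{mainformula2}) becomes the first trailing binomial of (\ref{mainformula}) because $t_{n-3}=a_{n-2}-cs_{n-3}+s_{n-4}-e_2$; the $i=n-2$ factor $\gchoose{a_{n-1}-cs_{n-2}}{t_{n-2}}$ becomes the second trailing binomial because its top equals $-a_{n-3}+ce_2$ (using $a_{n-1}=ca_{n-2}-a_{n-3}$ and $s_{n-2}=a_{n-2}-e_2$) and its bottom equals $-a_{n-3}+ce_2-e_1+s_{n-3}$. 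The monomial $x_1^{cs_{n-2}}x_2^{c(a_{n-1}-s_{n-1})}$ equals $x_1^{c(a_{n-2}-e_2)}x_2^{ce_1}$ by the very definitions of $e_1,e_2$. Because the two trailing factors coincide \emph{as modified binomial coefficients}, no separate analysis of when $\gchoose{A}{B}$ vanishes is needed, and the finiteness of each sum is inherited from the other. It remains to match the index sets: the box constraints on $t_0,\dots,t_{n-4}$ in (\ref{cond502}) and (\ref{cond501}) coincide; the $i=n-3$ box constraint $0\le t_{n-3}\le a_{n-2}-cs_{n-3}$ of (\ref{cond502}) is exactly the double inequality $0\le a_{n-2}-cs_{n-3}-e_2+s_{n-4}\le a_{n-2}-cs_{n-3}$ of (\ref{cond501}); and a one-line computation gives $e_2a_{n-1}-e_1a_{n-2}=s_{n-1}a_{n-2}-s_{n-2}a_{n-1}$, so the final inequalities agree as well. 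Hence the index sets are in bijection and the summands agree, so the two sums are equal.

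The main obstacle is the bookkeeping in the second paragraph: proving $s_i=cs_{i-1}-s_{i-2}+t_{i-1}$ and then correctly isolating how $t_{n-3}$ and $t_{n-2}$ enter $s_{n-2}$ and $s_{n-1}$, while the lower $s_i$ remain functions of $t_0,\dots,t_{n-4}$ only. I expect the sole genuinely delicate point to be the small-$n$ boundary (in particular $n=3$, where the range $t_0,\dots,t_{n-4}$ is empty and one meets $s_{n-4}=s_{-1}$), which I would dispatch using the empty-product and empty-sum conventions fixed at the start of the paper; once those are pinned down, the remainder is the elementary linear-algebra identity above.
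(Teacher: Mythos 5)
Your proof is correct and follows essentially the same route as the paper: the substitution $e_i=a_{n-i}-s_{n-i}$ together with the observation that $t_{n-i-1}$ enters $s_{n-i}$ with coefficient $a_2=1$, so the change of variables is a bijection on integer pairs. You merely carry out in full the term-by-term and index-set verifications that the paper leaves implicit.
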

\begin{proof}
In (\ref{mainformula}), if we substitute $a_{n-i}-s_{n-i}$ for $e_i$ ($i=1,2$), we obtain (\ref{mainformula2}). Note that the coefficient of $t_{n-i-1}$ in $s_{n-i}$ is equal to 1. Hence, as $e_i$ runs over integers, so does $t_{n-i-1}$.
\end{proof}

\begin{proof}[Proof of Theorem~\ref{mainthm2}]
 It is not hard to check the statement for $n=3,4,5$. When $n\geq 5$, we use induction on $n$.

 Suppose that the statement holds for $n$ or less. Then by the obvious shift, we have
\Small{ $$
x_{n+1}= x_2^{-a_{n-1}} x_3^{-a_{n-2}}  \sum_{ t_0,t_1,\cdots,t_{n-2}} \left[ \left( \prod_{i=0}^{n-2} \left[\begin{array}{c}{{a_{i+1} - cs_i    } } \\{t_i} \end{array}\right]   \right)  x_2^ {cs_{n-2}} x_3^{c(a_{n-1} - s_{n-1})}\right],
 $$}\normalsize{where the summation runs over all integers $ t_0,...,t_{n-2}$ satisfying (\ref{cond502}).
 
 Substituting $\frac{x_2^c+1}{x_1}$ into $x_3$, we get}
\Small{
$$\aligned &x_{n+1}\\
&= x_2^{-a_{n-1}}  \sum_{ t_0,t_1,\cdots,t_{n-2}} \left[ \left( \prod_{i=0}^{n-2} \left[\begin{array}{c}{{a_{i+1} - cs_i    } } \\{t_i} \end{array}\right]   \right)  x_2^ {cs_{n-2}} \left(\frac{x_2^c+1}{x_1}\right)^{c(a_{n-1} - s_{n-1})-a_{n-2}}\right],
\\
&= x_2^{-a_{n-1}}  \sum_{ t_0,t_1,\cdots,t_{n-2}} \left[ \left( \prod_{i=0}^{n-2} \left[\begin{array}{c}{{a_{i+1} - cs_i    } } \\{t_i} \end{array}\right]   \right)  x_2^ {cs_{n-2}} \left(\frac{x_2^c+1}{x_1}\right)^{a_{n} - cs_{n-1}}\right],
\\
&= x_2^{-a_{n-1}}  \sum_{ t_0,t_1,\cdots,t_{n-2}} \left[ \left( \prod_{i=0}^{n-2} \left[\begin{array}{c}{{a_{i+1} - cs_i    } } \\{t_i} \end{array}\right]   \right) \sum_{t_{n-1}\in \mathbb{Z}}     \left[\begin{array}{c}{{a_{n} - c s_{n-1}   } } \\{t_{n-1}} \end{array}\right]  (x_2^c)^{a_{n} - cs_{n-1}-t_{n-1}}  {x_1}^{cs_{n-1} - a_{n}}  x_2^ {cs_{n-2}} \right],
\\
&= x_2^{-a_{n-1}}  \sum_{ t_0,t_1,\cdots,t_{n-1}} \left[ \left( \prod_{i=0}^{n-1} \left[\begin{array}{c}{{a_{i+1} - cs_i    } } \\{t_i} \end{array}\right]   \right) (x_2^c)^{a_{n} - cs_{n-1}-t_{n-1}}  {x_1}^{cs_{n-1} - a_{n}}  x_2^ {cs_{n-2}} \right],
\\
&=  x_1^{-a_n} x_2^{-a_{n-1}} \sum_{ t_0,t_1,\cdots,t_{n-1}} \left[ \left( \prod_{i=0}^{n-1} \left[\begin{array}{c}{{a_{i+1} - cs_i    } } \\{t_i} \end{array}\right]   \right) {x_1}^{cs_{n-1}} x_2^{c(a_{n} - s_{n})}    \right],
\endaligned$$}
\normalsize{where  the last equality follows from }
$$s_n=\sum_{j=1}^{n-1} a_{n-j+1}t_j=\sum_{j=1}^{n-1} (ca_{n-j}-a_{n-j-1})t_j=cs_{n-1}-s_{n-2}+t_{n-1}.$$If one worries about convergence of the sum, then we could have begun by assuming that $|x_2|<1$, but since we will eventually show that the sum is finite, the convergence should not be a problem.

Remember that $t_0,...,t_{n-2}$  satisfy (\ref{cond502}). By identifying $a_{n+1-i}-s_{n+1-i}$ with $e_i$ ($i=1,2$), Proposition~\ref{eanegineqcor}  implies that even if $t_{n-2}$ and $t_{n-1}$ run over the only integers satisfying $s_{n} a_{n-1} -s_{n-1} a_{n}  \geq 0$, we get the same result. On the other hand, in order to prove that Theorem~\ref{mainthm2} holds for $n+1$, we need to show that $t_{n-2}$ is enough to run over $0\leq t_{n-2} \leq a_{n-1} - cs_{n-2}$. The second inequality is clear, because otherwise $\gchoose{a_{n-1} - cs_{n-2}  }{ t_{n-2} }=0$ by Definition~\ref{modifiedbinomialcoeff}. So we want to show that 
\begin{equation}\label{zeroeq701}
 \sum_{ t_0,t_1,\cdots,t_{n-1}} \left[ \left( \prod_{i=0}^{n-1} \left[\begin{array}{c}{{a_{i+1} - cs_i}} \\{t_i} \end{array}\right]   \right) {x_1}^{cs_{n-1}} x_2^{c(a_{n} - s_{n})}    \right] =0,
\end{equation}
where the summation runs over all integers $ t_0,...,t_{n-1}$ satisfying 
\begin{equation}\label{cond503}\left\{
\begin{array}{l} 0\leq t_i \leq a_{i+1} - cs_i \, (0\leq i\leq n-3),\\
s_{n-1} a_{n-2} -s_{n-2} a_{n-1}  \geq 0,  \\
t_{n-2}\leq a_{n-1} - cs_{n-2}<0, \text{ and }\\
s_{n} a_{n-1} -s_{n-1} a_{n}  \geq 0.      
 \end{array} \right.\end{equation}

To do this, we will show that $a_n - cs_{n-1} <0$. Suppose to the contrary that  $a_n - cs_{n-1} \geq 0$.
First of all, we have
\begin{equation}\label{eq070301}\aligned
&a_{n-3}s_{n-2} - a_{n-2}(a_{n-1}-s_{n-1}) \\
&=ca_{n-2}s_{n-2} - a_{n-1}s_{n-2}- a_{n-2}(a_{n-1}-s_{n-1})\\
&>a_{n-2}a_{n-1}- a_{n-1}s_{n-2} - a_{n-2}(a_{n-1}-s_{n-1}) \,\,\,\,\, \text{ since }a_{n-1}-cs_{n-2}<0\\
&=s_{n-1} a_{n-2} -s_{n-2} a_{n-1}\underset{\text{by }(\ref{cond503})}\geq 0.
\endaligned\end{equation}
Then \begin{equation}\label{eq070302}\aligned
& a_{n-2}s_{n-1} - a_{n-1}s_{n-2} \\
&\underset{\text{by }(\ref{eq070301})}< a_{n-2}s_{n-1} - a_{n-1}\frac{a_{n-2}}{a_{n-3}}(a_{n-1}-s_{n-1}) \\
&=a_{n-2}\left( a_{n-1} - \left(1+\frac{a_{n-1}}{a_{n-3}}\right)(a_{n-1}-s_{n-1})   \right)\\
&=a_{n-2}\left( a_{n-1} - \left(1+\frac{a_{n-1}}{a_{n-3}}\right)\frac{a_{n-2}+a_n-cs_{n-1}}{c}   \right)\\
&\leq a_{n-2}\left( a_{n-1} - \frac{a_{n-3}+a_{n-1}}{a_{n-3}}\frac{a_{n-2}}{c}   \right) \text{ since }a_n - cs_{n-1} \geq 0\,\,\,\,\,\\
&= a_{n-2}\left( a_{n-1} - \frac{a_{n-2}^2}{a_{n-3}} \right)\\
&\underset{\text{by }(\ref{negone})}=-\frac{a_{n-2}}{a_{n-3}} <0,
\endaligned\end{equation}
which contradicts $s_{n-1} a_{n-2} -s_{n-2} a_{n-1}\geq 0$. Hence \begin{equation}\label{ancsnminus1leq0}a_n - cs_{n-1} <0.\end{equation}

Next we show that $s_{n-2}>a_n-s_n$. Suppose to the contrary that $s_{n-2}\leq a_n-s_n$. Then
$$\aligned &a_{n-1}-cs_{n-2}  \geq a_{n-1}-c(a_n-s_n) \underset{\text{by }(\ref{cond503})}\geq a_{n-1}-c \frac{(a_{n-1}-s_{n-1}) a_n}{a_{n-1}} \\
&\underset{\text{by }(\ref{negone})}= \frac{a_n a_{n-2}+1  }{a_{n-1}}-c \frac{(a_{n-1}-s_{n-1}) a_n}{a_{n-1}}=\frac{a_n}{a_{n-1}}(a_{n-2}-c(a_{n-1}-s_{n-1}))+\frac{1}{a_{n-1}}\\
&=\frac{a_n}{a_{n-1}}(cs_{n-1}-a_n)+\frac{1}{a_{n-1}}\underset{\text{by }(\ref{ancsnminus1leq0})}>0,
\endaligned$$
which contradicts $a_{n-1}-cs_{n-2}<0$ in (\ref{cond503}). Thus $s_{n-2}>a_n-s_n$, so we have $$a_n-cs_{n-1}< s_n+s_{n-2}- cs_{n-1}=t_{n-1},$$which gives $\gchoose{a_n-cs_{n-1}}{t_{n-1}}=0.$ Therefore, $(\ref{zeroeq701})=0.$ 

So far we have proved that  $$x_{n+1}=x_1^{-a_n} x_2^{-a_{n-1}} \sum_{ t_0,t_1,\cdots,t_{n-1}} \left[ \left( \prod_{i=0}^{n-1} \left[\begin{array}{c}{{a_{i+1} - cs_i    } } \\{t_i} \end{array}\right]   \right) {x_1}^{cs_{n-1}} x_2^{c(a_{n} - s_{n})}    \right],$$where the summation runs over all integers $ t_0,...,t_{n-1}$ satisfying 
\begin{equation}\label{cond504}\left\{
\begin{array}{l} 0\leq t_i \leq a_{i+1} - cs_i \, (0\leq i\leq n-2),\\
s_{n-1} a_{n-2} -s_{n-2} a_{n-1}  \geq 0,  \text{ and }\\
s_{n} a_{n-1} -s_{n-1} a_{n}  \geq 0.      
 \end{array} \right.\end{equation}
But we do not have to include $s_{n-1} a_{n-2} -s_{n-2} a_{n-1}  \geq 0$ in (\ref{cond504}), because $0\leq t_i \leq a_{i+1} - cs_i \, (0\leq i\leq n-2)$ imply $s_{n-1} a_{n-2} -s_{n-2} a_{n-1}  \geq 0$ as follows.
$$\aligned &s_{n-1} a_{n-2} -s_{n-2} a_{n-1} = (cs_{n-2}-s_{n-3}+t_{n-2}) a_{n-2} -s_{n-2} a_{n-1}\\ & =  (s_{n-2}a_{n-3}-s_{n-3}a_{n-2}) + t_{n-2}a_{n-2}=\cdots= (s_{2}a_{1}-s_{1}a_{2}) +\sum_{i=2}^{n-2} t_i a_i=0+\sum_{i=2}^{n-2} t_i a_i\geq 0.\endaligned$$
This completes the proof modulo Proposition~\ref{eanegineqcor}.
\end{proof}

\begin{prop}\label{eanegineqcor}
Fix four integers $c (\geq 1), n (\geq 3), e_1$ and $e_2$ satisfying $e_2 a_{n-1} -e_1 a_{n-2}<0$. Then
\begin{equation}\label{zeroformula}\aligned
&\sum_{ t_0,t_1,\cdots,t_{n-4}} \left[ \left( \prod_{i=0}^{n-4} \left[\begin{array}{c}{{a_{i+1} - cs_i    } } \\{t_i} \end{array}\right]   \right) \right.\\
&\,\,\,\,\,\,\,\,\,\,\,\,\,\,\,\,\,\,\,\,\,\,\,\times \left.\gchoose{a_{n-2} - cs_{n-3} }{a_{n-2} - cs_{n-3}-e_2+s_{n-4}}\gchoose{-a_{n-3} + c e_2 }{  -a_{n-3} + c e_2-e_1+s_{n-3} } \right]=0,
\endaligned\end{equation}where  the summation runs over all integers $t_0,\cdots,t_{n-j-4}$ satisfying $$
0\leq t_i \leq a_{i+1}-cs_i \,\, (0\leq i\leq n-4).   
$$
\end{prop}

This is a consequence of \cite[Proposition 3.5]{SZ} in the paper by Sherman and Zelevinsky. One also may give a geometric proof. Actually one can show that $e_2 a_{n-1} -e_1 a_{n-2}<0$ implies $\langle (e_1,e_2), (a_{n-1}-e_1, a_{n-2}-e_2)  \rangle <0,$ where $\langle \cdot , \cdot \rangle$ is the Euler inner product (for instance, see \cite{S}). Then the assertion follows from a result of Schofield \cite[Section 3]{S}, which says that $$\dim \text{Gr}_{(e_1,e_2)} M(n)=\langle (e_1,e_2), (a_{n-1}-e_1, a_{n-2}-e_2)  \rangle.$$ Hence if $\langle (e_1,e_2), (a_{n-1}-e_1, a_{n-2}-e_2)  \rangle <0$ then $\text{Gr}_{(e_1,e_2)} M(n)$ is empty, so its Euler characteristic $\chi(\text{Gr}_{(e_1,e_2)} M(n))$ is obviously zero, which is equivalent to $(\ref{zeroformula})=0$ by \cite[Theorem 3.2 and (3.5)]{CZ}. 

However we will give a different proof, because we want to keep the exposition self-contained. Before we give the proof, we need some lemmas.

\begin{lem}\label{genvander}
Let $A,B,q, m$ be $($possibly negative$)$ integers with $A+B\geq q\geq 0$. Let $P(w)\in \mathbb{Q}[w]$ be any polynomial of $w$ of degree $q$. Then
$$
\sum_{w\in\mathbb{Z}} P(w) \left[\begin{array}{c}A \\w \end{array}\right]  \left[\begin{array}{c}B \\m-w \end{array}\right] =\sum_{w\in\mathbb{Z}} P(w) \left[\begin{array}{c}A \\A-w \end{array}\right]  \left[\begin{array}{c}B \\B-m+w \end{array}\right].
$$
\end{lem}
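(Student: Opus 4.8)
The plan is to rewrite everything in terms of the ordinary generalized binomial coefficient $\binom{\cdot}{\cdot}$, using the identity $\gchoose{X}{Y}=\binom{X}{X-Y}$ recorded after Definition~\ref{modifiedbinomialcoeff}. Under this translation the modified coefficients become $\gchoose{A}{w}=\binom{A}{A-w}$ and $\gchoose{B}{m-w}=\binom{B}{B-m+w}$ on the left, and $\gchoose{A}{A-w}=\binom{A}{w}$ and $\gchoose{B}{B-m+w}=\binom{B}{m-w}$ on the right, so the asserted identity becomes
\[
\sum_{w\in\mathbb{Z}} P(w)\binom{A}{A-w}\binom{B}{B-m+w}\;=\;\sum_{w\in\mathbb{Z}} P(w)\binom{A}{w}\binom{B}{m-w}.
\]
Because a generalized binomial vanishes when its lower index is negative, the right-hand sum is supported on $0\le w\le m$ and the left-hand sum on $m-B\le w\le A$; both are finite, so there is no convergence issue. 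Since the two sides are $\mathbb{Q}$-linear in $P$ and the polynomials $\binom{w}{0},\binom{w}{1},\dots,\binom{w}{q}$ form a basis of the space of polynomials of $w$ of degree at most $q$, it suffices to verify the identity for $P(w)=\binom{w}{j}$ with $0\le j\le q$.

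First I would evaluate the right-hand side. The subset-of-a-subset identity $\binom{A}{w}\binom{w}{j}=\binom{A}{j}\binom{A-j}{w-j}$, valid for generalized binomials since it is a polynomial identity in $A$, gives, after setting $u=w-j$ and applying the Vandermonde convolution $\sum_{u}\binom{A-j}{u}\binom{B}{(m-j)-u}=\binom{A+B-j}{m-j}$,
\[
\sum_{w}\binom{w}{j}\binom{A}{w}\binom{B}{m-w}\;=\;\binom{A}{j}\binom{A+B-j}{m-j}.
\]
For the left-hand side I would substitute $v=A-w$, turning it into $\sum_{v}\binom{A-v}{j}\binom{A}{v}\binom{B}{(A+B-m)-v}$, then use the companion identity $\binom{A}{v}\binom{A-v}{j}=\binom{A}{j}\binom{A-j}{v}$ followed again by Vandermonde to obtain
\[
\sum_{w}\binom{w}{j}\binom{A}{A-w}\binom{B}{B-m+w}\;=\;\binom{A}{j}\binom{A+B-j}{A+B-m}.
\]

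It then remains to compare $\binom{A+B-j}{m-j}$ with $\binom{A+B-j}{A+B-m}$; these have complementary lower indices summing to the common top $N:=A+B-j$. This is exactly where the hypothesis $A+B\ge q\ge 0$ is essential: since $0\le j\le q$ we have $N=A+B-j\ge 0$, so $N$ is a nonnegative integer and the symmetry $\binom{N}{k}=\binom{N}{N-k}$ holds for every integer $k$ (both sides vanishing together when $k\notin[0,N]$). Hence the two expressions agree for each $j$, which by linearity yields the lemma. The only points requiring care — and the main obstacle to a clean write-up — are the bookkeeping of supports and conventions when passing between the modified and generalized binomial coefficients, and confirming that the subset-of-a-subset and Vandermonde identities remain valid when the top entries are negative; both are settled by viewing them as polynomial identities in $A$ and $B$ that hold for all large integer values and therefore identically.
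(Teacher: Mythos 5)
Your proof is correct and follows essentially the same route as the paper's: both reduce by linearity to a basis of degree-$\le q$ polynomials adapted to binomial coefficients (you use $\binom{w}{j}$, the paper uses the falling factorials $\prod_{i=0}^{p-1}(w-i)$), absorb that factor into the binomial coefficient via trinomial revision, apply Vandermonde's convolution, and invoke the symmetry of $\binom{A+B-j}{\cdot}$, which is exactly where the hypothesis $A+B\ge q$ enters. The only cosmetic difference is that by translating to generalized binomial coefficients throughout you avoid the paper's case split ($A,B\ge 0$ trivial, else WLOG $B<0$ so $A\ge p$), which the paper needs in order to manipulate the modified coefficients directly.
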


\begin{proof}
Since any polynomial of $w$ of degree $q$ is a  $\mathbb{Q}$-linear combination of  $\prod_{i=0}^{p-1}(w-i)$ $(0\leq p\leq q)$, it is enough to show that for any $p$ $(0\leq p\leq q)$,  we have
$$
\sum_{w\in\mathbb{Z}}\left( \prod_{i=0}^{p-1}(w-i)  \right)\left[\begin{array}{c}A \\w \end{array}\right]  \left[\begin{array}{c}B \\m-w \end{array}\right]  =\sum_{w\in\mathbb{Z}}\left( \prod_{i=0}^{p-1}(w-i)  \right)\left[\begin{array}{c}A \\ A-w \end{array}\right]  \left[\begin{array}{c}B \\B-m+w \end{array}\right] .
$$
If $A,B\geq 0$ then the equality is trivial. So we assume that either $A<0$ or $B<0$.  Without loss of generality, we assume $B<0$. Since $A+B\geq q$, we have $A\geq q$ hence $A\geq p$. Then
$$\aligned
 &\sum_{w\in\mathbb{Z}} \prod_{i=0}^{p-1}(w-i) \left[\begin{array}{c}A \\w \end{array}\right]  \left[\begin{array}{c}B \\m-w \end{array}\right] =\sum_{w\in\mathbb{Z}} \prod_{i=0}^{p-1}(w-i) \left[\begin{array}{c}A \\A-w \end{array}\right]  \left[\begin{array}{c}B \\m-w \end{array}\right] \,\,\,  (\text{since }A\geq 0) \\
& =\sum_{w\in\mathbb{Z}} \prod_{i=0}^{p-1}(w-i)  \prod_{i=0}^{w-1}\frac{A-i}{w-i}  \left[\begin{array}{c}B \\m-w \end{array}\right]   = \sum_{w\in\mathbb{Z}} \prod_{i=0}^{p-1}(A-i)  \prod_{i=p}^{w-1}\frac{A-i}{w-i}  \left[\begin{array}{c}B \\m-w \end{array}\right] \\
&   = \sum_{w\in\mathbb{Z}} \prod_{i=0}^{p-1}(A-i)  \left[\begin{array}{c}A-p \\w-p \end{array}\right] \left[\begin{array}{c}B \\m-w \end{array}\right] = \prod_{i=0}^{p-1}(A-i) \left[\begin{array}{c}A+B-p \\m-p \end{array}\right]\\
& = \prod_{i=0}^{p-1}(A-i) \left[\begin{array}{c}A+B-p \\A+B-m \end{array}\right] \,\,\,  (\text{since }A+B-p\geq 0)  \\
&   = \sum_{w\in\mathbb{Z}} \prod_{i=0}^{p-1}(A-i)  \left[\begin{array}{c}A-p \\A-w \end{array}\right] \left[\begin{array}{c}B \\B-m+w \end{array}\right]\\
&   = \sum_{w\in\mathbb{Z}} \prod_{i=0}^{p-1}(A-i)  \left[\begin{array}{c}A-p \\w-p \end{array}\right] \left[\begin{array}{c}B \\B-m+w \end{array}\right]  \,\,\,  (\text{since }A-p\geq 0)  \\
&=\sum_{w\in\mathbb{Z}} \prod_{i=0}^{p-1}(w-i) \left[\begin{array}{c}A \\A-w \end{array}\right]  \left[\begin{array}{c}B \\B-m+w \end{array}\right].
\endaligned$$
\end{proof}

\begin{lem}\label{importantprop} Fix four integers $c (\geq 1), n (\geq 3), e_1$ and $e_2$. Let $w_{n-2}=0$. For any $-1\leq j
\leq n-4$, define $f(j)$ by
$$\aligned
&f(j)=\sum_{ t_0,t_1,\cdots,t_{j}}\sum_{w_1,\cdots,w_{n-j-4}} \left[ \left( \prod_{i=0}^{j} \left[\begin{array}{c}{{a_{i+1} - cs_i    } } \\{t_i} \end{array}\right]   \right) \right.\\
&\,\,\,\,\,\,\,\,\,\,\,\,\,\times \gchoose{a_{j+2} - cs_{j+1} }{a_{j+2} - cs_{j+1}+s_{j}-(e_2a_{n-2-j}-e_1a_{n-3-j}-v_{n-3-j})}\\
&\,\,\,\,\,\,\,\,\,\,\,\,\,\times \gchoose{-a_{j+1} + c (e_2a_{n-2-j}-e_1a_{n-3-j}-v_{n-3-j}) }{ s_{j+1} -a_{j+1} + e_2a_{n-1-j}-e_1a_{n-2-j}-v_{n-2-j}+w_{n-3-j}}  \\
&  \,\,\,\,\,\,\,\,\,\,\,\,\left.\times \prod_{i=j+2}^{n-3}\gchoose{-a_i+c(e_2 a_{n-i-1} -e_1 a_{n-i-2}-v_{n-i-2} ) }{ -a_i+c(e_2 a_{n-i-1} -e_1 a_{n-i-2}-v_{n-i-2} )-w_{n-i-2}   }\right], \endaligned$$
where  $$v_i=\sum_{j=1}^{i-1} a_{i-j+1}w_j,$$
and the summations run over all integers $t_0,\cdots,t_j, w_1,\cdots,w_{n-j-4}$ satisfying $$
a_{i+1}-cs_i\geq 0 \, (0\leq i\leq j)    \,\,\,\text{ and } \,\,\,w_i\geq 0 \, (1\leq n-j-4).
$$
Then $f(-1)=f(0)=\cdots=f(n-4)$.
\end{lem}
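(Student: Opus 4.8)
The plan is to prove the whole chain by a finite telescoping, establishing $f(j)=f(j+1)$ for each $j$ with $-1\le j\le n-5$; a single application of the reflection identity of Lemma~\ref{genvander} accomplishes each step. The starting observation is that $f(j)$ and $f(j+1)$ are sums over the same number $n-3$ of integer variables, and in fact over almost the same variables: both range over $t_0,\dots,t_j$ and $w_1,\dots,w_{n-j-5}$, the only discrepancy being that $f(j)$ sums over the top index $w:=w_{n-j-4}$ while $f(j+1)$ sums over a new top index $u:=t_{j+1}$. Moreover the factors that do not involve the transferred variable coincide: the tail product $\prod_{i\ge j+3}$ is given by the identical formula in both (it depends only on the $w_i$ of index $<n-j-4$, through the $v_i$), and the head product $\prod_{i\le j}\gchoose{a_{i+1}-cs_i}{t_i}$ is common as well. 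So after fixing all shared variables it suffices to prove the single-variable identity relating the $w$-sum in $f(j)$ to the $u$-sum in $f(j+1)$.

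Next I would isolate the factors carrying the transferred variable. In $f(j)$ exactly three factors depend on $w$: the two ``transition'' binomials and the top ($i=j+2$) term of the tail product. Two of these are genuine modified binomial coefficients in which $w$ enters linearly in the lower entry; the crucial point is that the remaining one, the transition factor with top entry $-a_{j+1}+\cdots$, is secretly a \emph{polynomial} in $w$. Indeed, both its upper and lower entries are shifted by the same amount $-cw$ (the upper through the term $-c\,v_{n-3-j}$ and the lower through $-v_{n-2-j}$, the stray boundary variable $w_{n-3-j}$ being cancelled by the top term of $v_{n-2-j}$), so its gap is independent of $w$ and, by Definition~\ref{modifiedbinomialcoeff}, its value is a polynomial $P(w)$ whose degree equals that constant gap. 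This is precisely the shape $\sum_w P(w)\gchoose{A}{A-w}\gchoose{B}{B-m+w}$ appearing on one side of Lemma~\ref{genvander}, with $A,B$ read off from the two binomial factors and $m$ from their constant offsets.

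I would then apply Lemma~\ref{genvander} to reflect the two binomials, turning the $f(j)$-summand into $\sum_w P(w)\gchoose{A}{w}\gchoose{B}{m-w}$, and finally rename $w\mapsto u=t_{j+1}$ and re-express the $w$-weighted $v$-sums as $t$-weighted $s$-sums via the definitions $s_i=\sum_\ell a_{i-\ell+1}t_\ell$ and $v_i=\sum_\ell a_{i-\ell+1}w_\ell$. The target is that the resulting expression is exactly the $u$-part of $f(j+1)$, whose own three $u$-dependent factors are the new head term $\gchoose{a_{j+2}-cs_{j+1}}{t_{j+1}}$ together with the two transition factors of $f(j+1)$; dually to the previous paragraph, the transition factor with top entry $a_{j+3}-cs_{j+2}$ is now the polynomial weight and the other is a genuine binomial. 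Matching the entries $A,B,m$ and verifying that the two polynomial weights agree is carried out using the recurrence $a_i=ca_{i-1}-a_{i-2}$ and the quadratic identity \eqref{negone}.

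The main obstacle is exactly this last reconciliation. The reflection supplied by Lemma~\ref{genvander} is clean, but the polynomial weight sits on the transition factor with top entry $-a_{j+1}+\cdots$ before the step and on the one with top entry $a_{j+3}-cs_{j+2}$ afterward, so one must check that these two a priori different polynomials---together with the upper entries $A,B$ and the offset $m$---really coincide once the $v$-indices are rewritten as $s$-indices. A secondary point to verify is the hypothesis $A+B\ge\deg P\ge 0$ of Lemma~\ref{genvander}, which must be deduced from the constraints $0\le t_i\le a_{i+1}-cs_i$ and $w_i\ge 0$ that define the summation ranges; the degenerate endpoints $j=-1$ (no $t$'s) and $j=n-4$ (no $w$'s), where $a_1=0$ forces several entries to vanish, should be checked separately.
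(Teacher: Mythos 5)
Your proposal is correct and follows essentially the same route as the paper: a telescoping chain of equalities $f(j)=f(j\pm 1)$, each step effected by an affine change of variables exchanging $t_{j+1}$ with $w_{n-j-4}$, the observation that the transition factor whose upper and lower entries both shift by $-cw$ is a polynomial of constant degree, and a single application of Lemma~\ref{genvander} to reflect the two remaining binomials, with the entries reconciled via the recurrence $a_i=ca_{i-1}-a_{i-2}$. The paper carries out only the step $f(n-4)=f(n-5)$ in detail and declares the rest analogous, so your generic-step description, including the caveats about verifying the hypothesis of Lemma~\ref{genvander} and the endpoint cases, matches its proof in both substance and level of detail.
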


\begin{proof}
This is essentially a change of variables, together with the help of Lemma~\ref{genvander}.   We frequently  use \begin{equation}\label{local1701}\aligned
&a_1=0, \,\, a_2=1, \,\, a_i=ca_{i-1} -a_{i-2},\\
&s_i=\sum_{j=1}^{i-1} a_{i-j+1}t_j=\sum_{j=1}^{i-1} (ca_{i-j}-a_{i-j-1})t_j=cs_{i-1}-s_{i-2}+t_{i-1}, \text{ and }\\
&v_i=\sum_{j=1}^{i-1} a_{i-j+1}w_j=\sum_{j=1}^{i-1} (ca_{i-j}-a_{i-j-1})w_j=cv_{i-1}-v_{i-2}+w_{i-1}.     \endaligned\end{equation} 

We will give a detailed proof for $f(n-4)=f(n-5)$. The rest of the equalities can be obtained similarly.\tiny{
$$\aligned &f(n-4)=\sum_{ t_0,t_1,\cdots,t_{n-5}}\left[\left(\prod_{i=0}^{n-5} \gchoose{a_{i+1} - cs_i  }{t_{i}}\right)\right.\\ 
&\,\,\,\,\,\,\,\,\,\,\,\,\,\,\,\,\,\,\,\,\,\,\,\,\,\,\,\,\,\,\,\left.\times\sum_{t_{n-4}\in\mathbb{Z}}\gchoose{a_{n-3} - cs_{n-4} }{t_{n-4}}\gchoose{a_{n-2} - cs_{n-3} }{a_{n-2} - cs_{n-3}-e_2+s_{n-4}}\gchoose{-a_{n-3} + c e_2 }{  -a_{n-3} + c e_2-e_1+s_{n-3} }\right].\endaligned$$}\normalsize{Since $a_{n-3} - cs_{n-4}\geq 0$, we have}\tiny{
$$\aligned &f(n-4)=\sum_{ t_0,t_1,\cdots,t_{n-5}}\left[\left(\prod_{i=0}^{n-5} \gchoose{a_{i+1} - cs_i  }{t_{i}}\right)\right.\\ 
&\,\,\,\,\,\,\,\,\,\,\,\,\,\,\,\,\,\,\,\,\,\,\,\,\,\,\,\,\,\,\,\left.\times\sum_{t_{n-4}\in\mathbb{Z}}\gchoose{a_{n-3} - cs_{n-4} }{a_{n-3} - cs_{n-4}-t_{n-4}}\gchoose{a_{n-2} - cs_{n-3} }{a_{n-2} - cs_{n-3}-e_2+s_{n-4}}\gchoose{-a_{n-3} + c e_2 }{  -a_{n-3} + c e_2-e_1+s_{n-3} }\right].\endaligned$$}\normalsize{Substituting $a_{n-3}-cs_{n-4}+s_{n-5}-(ce_2-e_1)+w_1$ into $t_{n-4}$, we get}\tiny{$$\aligned &f(n-4)=\sum_{ t_0,t_1,\cdots,t_{n-5}}\left[\left(\prod_{i=0}^{n-5} \gchoose{a_{i+1} - cs_i  }{t_{i}}\right)\right.\\ 
&\,\,\,\,\,\,\,\,\,\,\,\,\,\,\,\,\,\,\,\,\,\,\,\,\,\,\,\,\,\,\,\left.\times\sum_{w_{1}\in\mathbb{Z}}\gchoose{a_{n-3} - cs_{n-4} }{-s_{n-5}+ce_2-e_1-w_1}\gchoose{-a_{n-4}+c(ce_2-e_1) - cw_1 }{-a_{n-4}+c(ce_2-e_1) - cw_1-e_2+s_{n-4}}\gchoose{-a_{n-3} + c e_2 }{ w_1 }\right].\endaligned$$}\normalsize{Here }\tiny{$\gchoose{-a_{n-4}+c(ce_2-e_1) - cw_1 }{-a_{n-4}+c(ce_2-e_1) - cw_1-e_2+s_{n-4}}$}\normalsize{ (if nonzero) can be regarded as a polynomial of $w_1$ of degree $e_2-s_{n-4}\geq 0$. Since $$0\leq e_2-s_{n-4}\leq (a_{n-3} - cs_{n-4}) +(-a_{n-3} + c e_2),$$ we can apply Lemma~\ref{genvander}. Then we obtain}\tiny{
$$\aligned &f(n-4)\\
&=\sum_{ t_0,t_1,\cdots,t_{n-5}}\left[\left(\prod_{i=0}^{n-5} \gchoose{a_{i+1} - cs_i  }{t_{i}}\right)\right.\\ 
&\,\,\,\,\,\,\,\,\,\,\,\,\,\,\left.\times\sum_{w_{1}\in\mathbb{Z}}\gchoose{a_{n-3} - cs_{n-4} }{a_{n-3} - cs_{n-4}+s_{n-5}-(ce_2-e_1)+w_1}\gchoose{-a_{n-4}+c(ce_2-e_1) - cw_1 }{-a_{n-4}+c(ce_2-e_1) - cw_1-e_2+s_{n-4}}\gchoose{-a_{n-3} + c e_2 }{-a_{n-3} + c e_2- w_1 }\right]\\
&=\sum_{ t_0,t_1,\cdots,t_{n-5}}\left[\left(\prod_{i=0}^{n-5} \gchoose{a_{i+1} - cs_i  }{t_{i}}\right)\right.\\ 
&\,\,\,\,\,\,\,\,\,\,\,\,\,\,\,\,\,\,\,\,\,\,\,\,\,\,\,\,\,\,\,\,\,\,\,\,\,\,\,\,\,\,\,\,\,\,\,\times\sum_{w_{1}\in\mathbb{Z}}\gchoose{a_{n-3} - cs_{n-4} }{a_{n-3} - cs_{n-4}+s_{n-5}-(e_2 a_3-e_1 a_2- v_2)} \gchoose{-a_{n-4}+c(e_2 a_3-e_1 a_2- v_2) }{s_{n-4}-a_{n-4}+e_2a_4-e_1a_3-v_3+w_2 }\\
&\,\,\,\,\,\,\,\,\,\,\,\,\,\,\,\,\,\,\,\,\,\,\,\,\,\,\,\,\,\,\,\,\,\,\,\,\,\,\,\,\,\,\,\,\,\,\,\left.\times\gchoose{-a_{n-3}+c(e_2 a_{2} -e_1 a_{1}-v_{1} ) }{-a_{n-3}+c(e_2 a_{2} -e_1 a_{1}-v_{1} )-w_{1}   }\right],\endaligned $$}\normalsize{where we have used (\ref{local1701}). Since }\tiny{$\gchoose{-a_{n-3}+c(e_2 a_{2} -e_1 a_{1}-v_{1} ) }{-a_{n-3}+c(e_2 a_{2} -e_1 a_{1}-v_{1} )-w_{1}   }$}\normalsize{$=0$ for $w_1<0$, we actually have }\tiny{$$\aligned & f(n-4)=\sum_{ t_0,t_1,\cdots,t_{n-5}}\left[\left(\prod_{i=0}^{n-5} \gchoose{a_{i+1} - cs_i  }{t_{i}}\right)\right.\\ 
&\,\,\,\,\,\,\,\,\,\,\,\,\,\,\,\,\,\,\,\,\,\,\,\,\,\,\,\,\,\,\,\,\,\,\,\,\,\,\,\,\,\,\,\,\,\,\,\,\,\,\,\,\,\,\,\,\,\,\,\times\sum_{w_{1}\geq 0}\gchoose{a_{n-3} - cs_{n-4} }{a_{n-3} - cs_{n-4}+s_{n-5}-(e_2 a_3-e_1 a_2- v_2)} \gchoose{-a_{n-4}+c(e_2 a_3-e_1 a_2- v_2) }{s_{n-4}-a_{n-4}+e_2a_4-e_1a_3-v_3+w_2 }\\
&\,\,\,\,\,\,\,\,\,\,\,\,\,\,\,\,\,\,\,\,\,\,\,\,\,\,\,\,\,\,\,\,\,\,\,\,\,\,\,\,\,\,\,\,\,\,\,\,\,\,\,\,\,\,\,\,\,\,\,\left.\times\gchoose{-a_{n-3}+c(e_2 a_{2} -e_1 a_{1}-v_{1} ) }{-a_{n-3}+c(e_2 a_{2} -e_1 a_{1}-v_{1} )-w_{1}   }\right],\endaligned $$}\normalsize{which is equal to $f(n-5)$.

Again since $a_{n-4} - cs_{n-5}\geq 0$, we have}
\tiny{$$\aligned & f(n-5)=\sum_{ t_0,t_1,\cdots,t_{n-6}}\sum_{w_{1}\geq 0}\left[\left(\prod_{i=0}^{n-6} \gchoose{a_{i+1} - cs_i  }{t_{i}}\right)\right.\\ 
&\,\,\,\,\,\times\gchoose{a_{n-4} - cs_{n-5}   }{a_{n-4} - cs_{n-5}-t_{n-5}  }  \gchoose{a_{n-3} - cs_{n-4} }{a_{n-3} - cs_{n-4}+s_{n-5}-(e_2 a_3-e_1 a_2- v_2)} \gchoose{-a_{n-4}+c(e_2 a_3-e_1 a_2- v_2) }{s_{n-4}-a_{n-4}+e_2a_4-e_1a_3-v_3+w_2 }\\
&\,\,\,\,\,\left.\times\gchoose{-a_{n-3}+c(e_2 a_{2} -e_1 a_{1}-v_{1} ) }{-a_{n-3}+c(e_2 a_{2} -e_1 a_{1}-v_{1} )-w_{1}   }\right].\endaligned $$}\normalsize{In the same manner as above, i.e. by substituting $a_{n-4}-cs_{n-5}+s_{n-6}-(e_2a_4-e_1a_3)+v_3$ into $t_{n-5}$ and then applying Lemma~\ref{genvander}, it is not hard to show that
$$f(n-5)=f(n-6).$$

Repeating this process, we eventually obtain the desired equalities.}
\end{proof}

\begin{proof}[Proof of Proposition~\ref{eanegineqcor}]
By Lemma~\ref{importantprop}, the left-hand side of (\ref{zeroformula}), which is $f(n-4)$, is equal to $f(-1)$. Since $a_1=0$, the first modified binomial coefficient in $f(-1)$ is equal to $$\gchoose{0}{-(e_2a_{n-1}-e_1a_{n-2}-\sum_{j=1}^{n-3} a_{n-1-j}w_j)}.$$ Here 
$\sum_{j=1}^{n-3} a_{n-1-j}w_j\geq 0$ since $w_j\geq 0$. Therefore, if $e_2a_{n-1}-e_1a_{n-2}<0$ then  $$\gchoose{0 }{-(e_2a_{n-1}-e_1a_{n-2}-\sum_{j=1}^{n-3} a_{n-1-j}w_j)}=0$$ for any $w_j\geq 0$, which gives $f(-1)=0$. This completes the proof.
\end{proof}


\begin{proof}[Proof of Corollary~\ref{maincor}]
Corollary~\ref{maincor} is an immediate consequence of Theorem~\ref{mainthm} thanks to a result of Caldero and  Zelevinsky \cite[Theorem 3.2 and (3.5)]{CZ}. If $e_2 a_{n-1} -e_1 a_{n-2}<0$ then following from the discussion after  Proposition~\ref{eanegineqcor}, we have $(\ref{submainformula})=0=\chi(\text{Gr}_{(e_1,e_2)} M(n))$.  
\end{proof}

\begin{proof}[Proof of Corollary~\ref{maincor2}]
By (\ref{cond512}), all the modified binomial coefficients except for the last one in (\ref{submainformula}) are non-negative. If $e_2\geq \frac{a_{n-3}}{c}$ then the last one also becomes non-negative. Therefore, Corollary~\ref{maincor} implies that $\chi(\text{Gr}_{(e_1,e_2)} M(n))$ is non-negative.
\end{proof}

\end{document}